\newtheorem{theorem}{Theorem}[section]
\newtheorem{remark}[theorem]{Remark}
\newtheorem{corollary}[theorem]{Corollary}
\newtheorem{definition}[theorem]{Definition}
\newproof{pf}{Proof}
\begin{document}
	
\begin{frontmatter}
\title{New criteria for boundedness and stability of nonlinear neutral delay differential equations by Krasnoselskii's fixed point theorem} 
\author{Yang Li}
\author{Guiling Chen}
\address{Department of Mathematics,	Southwest Jiaotong University,
		Chengdu 610031, China}	
\begin{abstract}
In this paper, we study boundedness,uniform stability and asymptotic stability of a class of nonlinear neutral delay differential equations by using Krasnoselskii's fixed point theorem. The results obtained in this paper extend and improve the work of Jin and Luo(Nonlinear Anal 68:3307-3315,2008), and Benhadri, Mimia(Differ Equ Dyn Syst 29:3-19,2021). An example is given to illustrate the effectiveness of the proposed results.
\end{abstract}

\end{frontmatter}
\section{Introduction and preliminaries}
Neutral delay differential equations are often used to describe the dynamical systems which depend on present and past states. Practical examples of neutral delay differential systems include biological models of single species growth\cite{ref1-1}, processes including steam or water pipes, heat exchanges\cite{ref1-2},population ecology\cite{ref1-3},and other engineering systems\cite{ref1-2}.

For more than one hundred years, Lyapunov's direct method has been an effective technique for dealing with stability in ordinary and functional differential equations. However, Lyapunov's direct method is not always effective in establishing stability results for a differential equation. There is a variety of difficulties, such as, Lyapunov's direct method requires pointwise conditions while many practical problems don't meet these conditions, a suitable Lyapunov function is not easy to construct, there remain problems with ascertaining limit sets when the equation becomes unbounded or the derivative is not definite. A good news is that Burton and other authors have applied fixed point theory to investigate the stability of deterministic systems and obtained some more applicable conclusions, for example, the works [4-13]. This method possesses the advantage that it can yield existence, uniqueness, and stability in one step. We can easily obtain the existence through the theorem itself, and obtain stability by constructing contradiction. 

Recently, many researchers have studied the boundedness and stability of neutral delay differential equations by applying Krasnoselskii's fixed point theorem. T.A. Burton \cite{ref3} has investigated the boundedness and the stability of the linear equation
\begin{eqnarray*}
	x'(t)=-a(t)+x(t-r_1(t)).
\end{eqnarray*}

T.A. Burton and T.Furumochi \cite{ref4} have studied the boundedness and the asymptotic stability of the following equation
\begin{eqnarray*}
	x'(t)=-a(t)x(t-r_1)+b(t)x^\frac{1}{3}(t-r_2(t)),
\end{eqnarray*}
with $r_1\geq 0$ is a constant and $a \in C(\mathbb{R^+},(0,\infty))$. 

C.H. Jin and J.W. Luo \cite{ref6} have studied the boundedness and stability of the following equation by using the Krasnoselskii's fixed point theorem
\begin{eqnarray*}
	x'(t)=-a(t)x(t-r_1(t))+b(t)x^\frac{1}{3}(t-r_2(t)).
\end{eqnarray*}

Benhadri, M. \cite{ref7} has presented the stability results by using Krasnoselskii's fixed point theorem for the following equation
\begin{eqnarray*}
	x'(t)=-a(t)x(t-r_1(t))+b(t)x'(t-r_1(t))+c(t)x^{\gamma}(t-r_2(t)),\quad t \geq t_0.
\end{eqnarray*} 

Ardjouni,A., Djoudi,A. \cite{ref8} have obtained the boundedness and stability results by using Krasnoselskii's fixed point theory for the following equation
\begin{eqnarray*}
	x'(t)=-a(t)x(t-r_1(t))+b(t)x'(t-r_1(t))+c(t)G(x^{\gamma}(t-r_2(t))).
\end{eqnarray*}

We notice that the results in \cite{ref8} are mainly dependent on the constraint $ \left| \frac{b(t)}{1-r_1'(t)}\right| < 1$. However, there are some interesting examples where the constraint is not satisfied.

It is our aim in this paper to remove this constraint condition and study a general class of nonlinear neutral differential equations with variable delays 
\begin{eqnarray}\label{2}
	x'(t)=-a(t)x(t-r_1(t))+\frac{d}{dt}Q(t,x(t-r_1(t)))+d(t)F(x(t-r_1(t)),x(t-r_2(t)))+c(t)G(x^{\gamma}(t-r_2(t))), \quad t\geq t_0,
\end{eqnarray}
where $\gamma\in (0,1)$ is a quotient with odd positive integer denominator. We assume that $a,b,c\in C(\mathbb{R}^+,\mathbb{R})$, $r_1\in C^2((\mathbb{R^+},\mathbb{R^+})$, $r_2\in C((\mathbb{R^+},\mathbb{R^+})$, $r_1,r_2$ satisfy $t-r_j(t)\to\infty$ as $t\to\infty$, and for each $t \geq t_0$,
\begin{eqnarray*}
	m_j(t_0)=inf\left\{t-r_j(t),t \geq t_0\right\},m(t_0)=inf\left\{m_j(t_0),j=1,2\right\}.
\end{eqnarray*}
An initial condition for the differential equation\eqref{2} is defined by 
\begin{eqnarray}\label{initial}
	x(t)=\psi(t) \quad\text{for }  t\in [m(t_0), t_0],
\end{eqnarray}
where $ \psi(t)\in C([m(t_0),t_0],\mathbb{R}) $.

Let$(\mathcal{X},\left|\cdot\right|_h)$ be the Banach space of continuous functions $\varphi :[m(t_0,\infty) \to \mathbb{R}$ with
\begin{eqnarray*}
	\left|\varphi\right|_h: = \sup_{t \geq t_0}\left|\varphi(t)/h(t)\right| < \infty,
\end{eqnarray*}
for each $t_0 \geq 0$ and $\psi \in C([m(t_0),t_0],\mathbb{R})$ fixed, we define $\mathcal{X}_{\psi}$ as the following space:
\begin{eqnarray*}
	\mathcal{X}_{\psi} = \left\{\varphi \in \mathcal{X}: \left|\varphi(t)\right| \leq 1 \text{ for } t \in [m(t_0),\infty) \text{ and } \varphi(t) = \psi(t) \text{ if } t \in [m(t_0),t_0]\right\}.
\end{eqnarray*}	

We assume that the functions $F,G$ are locally Lipschitz continous, then for $t \geq t_0$, there are constants $k_2,k_3,k_4 >0$ so that if $x,y,z,w \in \mathcal{X_\psi}$ then
\begin{eqnarray}\label{Fk}
	\left|F(x,y)-F(z,y)\right|\leq k_2\Vert x-z\Vert,\quad\text{and}\quad\left|F(x,y)-F(x,w)\right|\leq k_3\Vert y-w\Vert, \quad\text{and}\quad F(0,0)=0.
\end{eqnarray}
and
\begin{eqnarray}\label{Gk}
	\left|G(x)-G(y)\right| \leq k_4\Vert x-y\Vert,\quad\text{and}\quad G(0)=0.
\end{eqnarray}
Moreover, $Q$ is continuous and there exist a function $b(t)\in C(\mathbb{R},\mathbb{R^+})$, such that for all $\varphi_1,\varphi_2 \in \mathcal{X_\psi}$ and for all $t \geq t_0$, we have 
\begin{eqnarray}\label{Qk}
	\left| Q(t,\varphi_1)-Q(t,\varphi_2)\right|\leq b(t)\Vert\varphi_1-\varphi_2\Vert,\quad\text{and}\quad Q(t,0)=0.
\end{eqnarray}

\begin{definition}
For each $(t_0,\psi) \in \mathbb{R^+}\times C([m(t_0),t_0],\mathbb{R})$, a solution of \eqref{2} through $(t_0,\psi)$ is a continuous function $x:[m(t_0),t_0+\rho) \to \mathbb{R}$ for some positive constant $\rho > 0$, such that x satisfies \eqref{2} on $[m(t_0),t_0+\rho) \to \mathbb{R}$, and $x=\psi$ on $[m(t_0),t_0]$. We denote such a solution by $x(t)=x(t,t_0,\psi)$. Besides, we define $\Vert\psi\Vert=\max \left\{\left|\psi\right|: m(t_0) \leq t \leq t_0\right\}$.
\end{definition}

\begin{definition}
	The zero solution of \eqref{2} is said to be 
	
	{\rm(i)} stable if for each $\epsilon>0$ and $t_0 \in \mathbb{R^+}$, there exist a $\delta=\delta(\epsilon,t_0)$ such that for $\psi \in C([m(t_0),t_0],\mathbb{R})$ and $\Vert\psi\Vert\leq\delta$ imply $|x(t,t_0,\psi)|<\epsilon$ for $t \geq t_0$;
	
	{\rm(ii)} uniformly stable if the $\delta $ in (i) is independent of $t_0$;
	
	{\rm(iii)} asymptotically stable if $x(t,t_0,\psi)$ is stable and for any $\epsilon>0$ and $t_0\geq 0$, there exists a $\delta=\delta(\epsilon,t_0)$ such that for $\psi \in C([m(t_0),t_0],\mathbb{R})$ and $\Vert\psi\Vert\leq\delta$ imply that $\lim_{t \to \infty}x(t,t_0,\psi)=0$.
\end{definition}

\begin{theorem}[Burton \cite{ref1}]
For each continuous initial function $\psi :[m(t_0),t_0] \to \mathbb{R}$, there exists a continuous solution $x(t,t_0,\psi)$, which satisfies \eqref{2} on an interval $[t_0,\sigma)$ for some $\sigma > 0$, and $x(t,t_0,\psi)=\psi(t)$, $t \in [m(t_0),t_0]$.
\end{theorem}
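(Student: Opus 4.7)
My plan is to reduce \eqref{2} to an integral fixed-point equation and produce a local solution via the Banach (or Krasnoselskii) fixed-point theorem. Integrating \eqref{2} from $t_0$ to $t$ collapses the neutral derivative $\frac{d}{dt}Q(\cdot,x(\cdot-r_1(\cdot)))$ to a boundary contribution, so a continuous $x:[m(t_0),t_0+\sigma)\to\mathbb{R}$ extending $\psi$ satisfies \eqref{2} iff, for $t\in[t_0,t_0+\sigma)$,
\begin{equation*}
x(t)=\psi(t_0)+Q(t,x(t-r_1(t)))-Q(t_0,\psi(t_0-r_1(t_0)))+\int_{t_0}^{t}\bigl[-a(s)x(s-r_1(s))+d(s)F(x(s-r_1(s)),x(s-r_2(s)))+c(s)G(x^{\gamma}(s-r_2(s)))\bigr]ds.
\end{equation*}
I would take this identity as the definition of an operator $T$ on the closed, bounded subset $B\subset C([m(t_0),t_0+\sigma],\mathbb{R})$ of continuous extensions of $\psi$ satisfying $\|\varphi-\psi(t_0)\|_\infty\le 1$ on $[t_0,t_0+\sigma]$.

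First I would verify $T(B)\subset B$ for $\sigma$ sufficiently small. Since $F(0,0)=G(0)=0$ and $Q(t,0)=0$, the Lipschitz bounds \eqref{Fk}--\eqref{Qk} yield pointwise estimates such as $|F(\varphi,\varphi)|\le(k_2+k_3)\|\varphi\|_\infty$ on $B$, and similarly for $|G|$ and for the oscillation of $Q(t,\varphi(t-r_1(t)))-Q(t_0,\psi(t_0-r_1(t_0)))$. Combined with the local bounds on $a,c,d,b$ and continuity of $r_1,r_2$, these force $\sup_{t\in[t_0,t_0+\sigma]}|T\varphi(t)-\psi(t_0)|\to 0$ uniformly on $B$ as $\sigma\to 0^+$, so invariance holds for small $\sigma$. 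A parallel Lipschitz computation gives
\begin{equation*}
\|T\varphi_1-T\varphi_2\|_\infty\le\Bigl(\sup_{[t_0,t_0+\sigma]}b(t)+\sigma M\Bigr)\|\varphi_1-\varphi_2\|_\infty,
\end{equation*}
where $M$ aggregates $k_2,k_3,k_4$ and sup bounds on $a,d,c$ over $[t_0,t_0+\sigma]$.

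The main obstacle is that $\sup_{[t_0,t_0+\sigma]}b(t)$ need not be below $1$, so Banach's principle is not directly available unless $b(t_0)<1$. In that favourable case, shrinking $\sigma$ produces a strict contraction and yields the fixed point. Otherwise I would fall back on Krasnoselskii's theorem by writing $T=A+C$, with $A\varphi(t):=Q(t,\varphi(t-r_1(t)))$ the Lipschitz (contractive) part and $C\varphi$ the constant plus integral remainder; uniform boundedness on $B$ and equicontinuity of the integral (via Arzelà--Ascoli, using continuity of the integrand in $s$) make $C$ compact, which is exactly the setting handled in Burton's monograph \cite{ref1}. Either route delivers a continuous fixed point on $[m(t_0),t_0+\sigma)$, which is the required solution $x(t,t_0,\psi)$.
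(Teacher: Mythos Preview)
The paper gives no proof of this statement at all: it is simply quoted from Burton's monograph \cite{ref1} as a background existence result, so there is no ``paper's proof'' to compare against.

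That said, your sketch has a genuine gap in the fallback step. When $b(t_0)\ge 1$ you propose to split $T=A+C$ with $A\varphi(t)=Q(t,\varphi(t-r_1(t)))$ and apply Krasnoselskii, calling $A$ the ``contractive'' part. But the Lipschitz constant of $A$ on $[t_0,t_0+\sigma]$ is precisely $\sup_{[t_0,t_0+\sigma]}b(t)$, the very quantity you have just conceded may be $\ge 1$; so $A$ is not a contraction and Krasnoselskii does not apply with this decomposition. (There is a second, smaller issue: the term $G(x^\gamma)$ is only H\"older in $x$ for $\gamma<1$, so it does not contribute a Lipschitz factor $k_4$ to your contraction estimate either.)

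The missing idea, and the one Burton actually uses for neutral equations, is to exploit the delay itself. If $r_1(t_0)>0$ and $r_2(t_0)>0$, continuity of $r_1,r_2$ gives $\sigma>0$ with $t-r_j(t)\le t_0$ for all $t\in[t_0,t_0+\sigma]$. On that interval every delayed argument lands in $[m(t_0),t_0]$, so $x(t-r_j(t))=\psi(t-r_j(t))$ is already known, and your integral identity produces $x(t)$ \emph{explicitly} as a continuous function of $t$; no fixed-point argument is required, and the size of $b(t)$ is irrelevant. This ``method of steps'' is the mechanism behind the cited local-existence theorem. Only when a delay vanishes at $t_0$ does one need an honest fixed-point argument, and then an extra smallness or structural hypothesis on the neutral term is unavoidable.
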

We end this section by stating the fixed point theorem that will be applied in this paper. For more details on Krasnoselskii's fixed point theorem, we refer to \cite{ref1}.

\begin{theorem}[Krasnoselskii]
	Let $\mathcal{M}$ be a closed convex non-empty subset of a Banach space $(\mathcal{X},\left\|\cdot\right\|)$. Suppose that $A$ and $B$ map $\mathcal{M}$ into $\mathcal{X}$ such that the following conditions hold
	
	{\rm (i)}$Ax+By \in \mathcal{M} ,\forall x,y \in \mathcal{M}$;
	
	{\rm (ii)}$A$ is continous and $A\mathcal{M}$ is contained in a compact set;
	
	{\rm (iii)}$B$ is a contraction with $\alpha < 1$.
 
    Then there is a $z \in \mathcal{M}$, with $z=Az+Bz$.
\end{theorem}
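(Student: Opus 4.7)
The plan is to reduce Krasnoselskii's theorem to a combination of the Banach contraction mapping principle and the Schauder fixed point theorem. First I would, for each fixed $y\in\mathcal{M}$, introduce the auxiliary map $T_y:\mathcal{M}\to\mathcal{X}$ defined by $T_y(x)=Ay+Bx$. Hypothesis (i) immediately yields $T_y(\mathcal{M})\subseteq\mathcal{M}$, and the contraction bound on $B$ transfers verbatim to $T_y$ since $\|T_y(x_1)-T_y(x_2)\|=\|Bx_1-Bx_2\|\le\alpha\|x_1-x_2\|$. Because $\mathcal{M}$ is a closed subset of the Banach space $\mathcal{X}$, it is complete, so the Banach contraction principle produces a unique $\phi(y)\in\mathcal{M}$ with $\phi(y)=Ay+B\phi(y)$, i.e.\ $(I-B)\phi(y)=Ay$. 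This repackages the hybrid problem $z=Az+Bz$ as the single fixed point equation $z=\phi(z)$.

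Next I would verify that $\phi:\mathcal{M}\to\mathcal{M}$ satisfies the hypotheses of Schauder's fixed point theorem. Subtracting the defining relations for $\phi(y)$ and $\phi(y')$ and applying the contraction estimate gives $\|\phi(y)-\phi(y')\|\le\|Ay-Ay'\|+\alpha\|\phi(y)-\phi(y')\|$, whence $\|\phi(y)-\phi(y')\|\le(1-\alpha)^{-1}\|Ay-Ay'\|$. Combined with continuity of $A$ from (ii), this shows $\phi$ is continuous. For precompactness of $\phi(\mathcal{M})$, the same inequality identifies $\phi$ with the composition of $A$ and the $(1-\alpha)^{-1}$-Lipschitz map $(I-B)^{-1}$ restricted to $A\mathcal{M}$; the continuous image of the precompact set $A\mathcal{M}$ (guaranteed by (ii)) is then itself precompact, and therefore so is $\phi(\mathcal{M})$. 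Since $\mathcal{M}$ is closed, convex, and non-empty, Schauder's theorem supplies a fixed point $z=\phi(z)\in\mathcal{M}$, which by construction satisfies $z=Az+Bz$.

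The main conceptual obstacle, as I see it, is spotting the decoupling trick: the two hypotheses on $A$ and $B$ are of very different natures (one topological/compactness, one metric/contractive), and they are bridged only after one freezes $y$, solves uniquely for $x=\phi(y)$ via contraction, and then hands $\phi$ to Schauder. Once that move is in place, every subsequent estimate is a two-line exercise. A technical point I would take care to state is that $(I-B)$ need not be everywhere defined or invertible on the ambient space $\mathcal{X}$; what the argument genuinely uses is that $(I-B)|_{\mathcal{M}}$ possesses a Lipschitz inverse on its range, which is exactly what the Banach contraction principle provides in the form $y\mapsto\phi(y)$.
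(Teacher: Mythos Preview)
Your argument is the classical proof of Krasnoselskii's theorem and is correct as written: freeze $y$, solve $x=Ay+Bx$ by Banach contraction to define $\phi(y)$, then feed $\phi$ to Schauder using the Lipschitz estimate $\|\phi(y)-\phi(y')\|\le(1-\alpha)^{-1}\|Ay-Ay'\|$ together with precompactness of $A\mathcal{M}$.

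There is, however, nothing to compare against in the paper itself. The authors do not prove Krasnoselskii's theorem; they merely state it as a tool and refer the reader to Burton's monograph \cite{ref1} for details. So your proposal is not an alternative to the paper's proof --- it supplies a proof where the paper gives none. If anything is worth noting, it is that the proof you sketch is precisely the one found in the standard references (including Burton), so there is no methodological divergence to discuss.
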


\section{Existence and boundedness}
In this section, we introduce two auxiliary continuous functions $g(t)$ and $p(t)$ to define appropriate mappings, and use Krasnoselskii's fixed point theorem to present criteria for existence and boundedness of equations \eqref{2} with initial condition \eqref{initial} which can be applied in the case $\left| \frac{b(t)}{1-r_1'(t)} \right| \geq 1$ and $\left|b(t-r_1(t))\right|\geq 1$ as well.

\begin{theorem}\label{th1}
	Consider the nonlinear neutral differential equation (\ref{2}) with the initial condition (\ref{initial}) and suppose the following conditions are satisfied:
	
	{\rm (i)} $ r_1(t) $ is twice differentiable with $ r_1'(t)\neq 1$ for all $t \in [m(t_0), \infty)$,

	{\rm (ii)} there exists a bounded function $ p:[m(t_0), \infty)\to (0, \infty)  $ with $ p(t)=1 $ for $ t\in [m(t_0),t_0] $ such that $ p'(t) $ exists for all $ t \in [m(t_0), \infty)$, and there exists a constant $\alpha \in (0,1)$, and $L_1, L_2 >0$, $b(t),k_2,k_3,k_4$ are denoted in (\ref{Fk})-(\ref{Qk}), and an arbitrary continuous function $ g \in C([m(t_0), \infty), \mathbb{R^+})$ such that for $\left|t_1-t_2\right| \leq 1$, 
\begin{eqnarray}\label{L1}
	\Big|\int_{t_1}^{t_2}\left|c(u)\frac{p^{\gamma}(u-r_2(u)}{p(u)}\right|du\Big|\leq L_1\left|t_1-t_2\right|,
\end{eqnarray}
and
\begin{eqnarray}\label{L2}
	\Big|\int_{t_1}^{t_2}g(u)du\Big|\leq L_2\left|t_1-t_2\right|,
\end{eqnarray}
while for $t\geq t_0$, there exist a constant $\alpha \in (0,1)$ such that 
	\begin{eqnarray}\label{the result 2}
		&&\left|\frac{p(t-r_1(t))}{p(t)}b(t-r_1(t))\right|+\int_{t-r_1(t)}^{t}\left|g(u)-\frac{p'(u)}{p(u)}\right|du\nonumber\\
		&&\quad+\int_{t_0}^{t}e^{-\int_{s}^{t}g(u)du}\left\{\left|\left(g(s-r_1(s))-\frac{p'(s-r_1(s))}{p(s-r_1(s))}\right)(1-r_1'(s))-\frac{a(s)p(s-r_1(s))}{p(s)}\right|\right\}ds\nonumber\\
		&&\quad+\int_{t_0}^{t}e^{-\int_{s}^{t}g(u)du}\left|g(s)\right|\left(\int_{s-r_1(s)}^{s}\left|g(u)-\frac{p'(u)}{p(u)}\right|du\right)ds \nonumber\\
		&&\quad+\int_{t_0}^{t}e^{-\int_{s}^{t}g(u)du}\left|\frac{g(s)p(s)-p'(s)}{p^2(s)}b(s-r_1(s))\right|\left|p(s-r_1(s))\right|ds\nonumber\\
		&&\quad+\int_{t_0}^{t}e^{-\int_{s}^{t}g(u)du}\left|\frac{d(s)}{p(s)}\right|\left|k_2p(s-r_1(s))+k_3p(s-r_2(s))\right|ds\nonumber\\
		&&\quad+k_4\int_{t_0}^{t}e^{-\int_{s}^{t}g(u)du}\left|\frac{c(s)}{p(s)}\right|\left|p^{\gamma}(s-r_2(s))\right|ds \leq \alpha.
	\end{eqnarray}
	
	If $\psi$ is a given continuous initial function which is sufficiently small then there is a solution $x(t,t_0,\psi)$ of \eqref{2} on $\mathbb{R}$ which is bounded.
\end{theorem}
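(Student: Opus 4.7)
The strategy is to convert (\ref{2}) into a fixed-point equation $y = Ay + By$ on the closed convex set $\mathcal{X}_\psi$, with $A$ compact and continuous and $B$ a contraction, and then invoke Krasnoselskii. The natural new variable is $y(t) = x(t)/p(t)$, so that the condition $|y|\le 1$ defining $\mathcal{X}_\psi$ translates back into $|x(t)|\le p(t)$ and, since $p$ is bounded by hypothesis (ii), into boundedness of $x$.

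First I would rewrite (\ref{2}), after division by $p(t)$, as $y'(t) + g(t)y(t) = \text{RHS}(t)$, where the right-hand side is the sum of $[g(t)-p'(t)/p(t)]\,y(t)$, the delay contribution $-a(t)p(t-r_1(t))y(t-r_1(t))/p(t)$, the neutral piece $p(t)^{-1}\frac{d}{dt}Q(t,x(t-r_1(t)))$, and the $F$- and $G$-nonlinearities divided by $p(t)$. The pivotal identity $[g(t)-p'(t)/p(t)]\,y(t) = \frac{d}{dt}\int_{t-r_1(t)}^{t}[g(u)-p'(u)/p(u)]\,y(u)\,du + [g(t-r_1(t))-p'(t-r_1(t))/p(t-r_1(t))]\,(1-r_1'(t))\,y(t-r_1(t))$ (Leibniz rule with both limits moving) is then substituted; combining its second summand with the $-a p(t-r_1)/p$ term groups the whole delayed-$y$ contribution into a single coefficient matching the third summand of (\ref{the result 2}). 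Multiplying by $e^{\int_{t_0}^{s}g(u)du}$, integrating on $[t_0,t]$, and integrating by parts on every explicit $d/ds$ that appears --- namely on the boundary integral just introduced and on the neutral derivative $p(s)^{-1}\frac{d}{ds}Q(s,x(s-r_1(s)))$ --- produces an integral equation whose seven groups of terms correspond one-to-one with the seven lines of (\ref{the result 2}).

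For the operator split, let $B$ collect the two ``non-exponential'' contributions that fall out at the upper endpoint $s=t$ of these integrations by parts, namely $Q(t,x(t-r_1(t)))/p(t)$ and $\int_{t-r_1(t)}^{t}[g(u)-p'(u)/p(u)]\,y(u)\,du$, and let $A$ collect the initial-data piece together with every integral carrying an $e^{-\int_s^{t} g(u)du}$ factor. Taking $\mathcal{M}=\mathcal{X}_\psi$, the bound $|Ay_1+By_2|(t)\le \alpha + (\text{small initial contribution})$ is read off directly from (\ref{the result 2}) together with (\ref{Fk})--(\ref{Qk}) and $F(0,0)=G(0)=Q(t,0)=0$; choosing $\psi$ so small that the initial contribution is $\le 1-\alpha$ then gives hypothesis (i) of Krasnoselskii. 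Hypothesis (iii), that $B$ is a contraction with constant at most $\alpha$, reduces to bounding the first two terms of (\ref{the result 2}) and so is immediate.

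The main technical step is compactness of $A\mathcal{M}$ via the Arzela--Ascoli theorem. Uniform boundedness follows from (\ref{the result 2}). For equicontinuity I would estimate $|(Ay)(t_1)-(Ay)(t_2)|$ for $|t_1-t_2|\le 1$ by splitting each $e^{-\int_s^{t} g}$ integral at $\min(t_1,t_2)$; the only integrands not automatically Lipschitz-in-$t$ are $c(u)p^{\gamma}(u-r_2(u))/p(u)$ and $g(u)$ themselves, and hypotheses (\ref{L1})--(\ref{L2}) provide precisely the Lipschitz bounds needed to control those increments. Continuity of $A$ follows from (\ref{Fk})--(\ref{Qk}) together with dominated convergence. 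Krasnoselskii then yields a fixed point $y\in\mathcal{X}_\psi$, whence $x=py$ is the desired continuous bounded solution of (\ref{2}) on $[m(t_0),\infty)$. The main obstacle is the integral reformulation in Step~1: tracking every boundary term produced by the successive integrations by parts and aligning them one-to-one with the seven summands in (\ref{the result 2}). Once that integral equation is in place, verification of Krasnoselskii's three hypotheses reduces to a mechanical bookkeeping exercise against (\ref{L1})--(\ref{the result 2}).
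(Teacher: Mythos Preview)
Your derivation of the integral equation via the substitution $x=pz$, the Leibniz identity for $\int_{t-r_1(t)}^{t}[g-p'/p]\,z\,du$, and the two integrations by parts is correct and matches the paper exactly. The divergence is in the operator split.

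In the paper, $A$ consists of the \emph{single} $G$-integral
\[
(A\varphi)(t)=\int_{t_0}^{t}e^{-\int_{s}^{t}g}\,\frac{c(s)}{p(s)}\,G\bigl(p^{\gamma}(s-r_2(s))\varphi^{\gamma}(s-r_2(s))\bigr)\,ds,
\]
and \emph{everything else} (initial data, both boundary terms, and all the remaining $e^{-\int_s^t g}$--integrals, including the $F$-term and the $Q$-integral) goes into $B$. The reason is that the $G$-term is the only one that fails to be Lipschitz in $\varphi$, because of the factor $\varphi^{\gamma}$ with $\gamma\in(0,1)$; every other term is Lipschitz via (\ref{Fk})--(\ref{Qk}), so it can safely sit in the contraction. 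The paper then checks that $B$ is an $\alpha$-contraction in a weighted norm $|\cdot|_h$, using all of lines~1--6 of (\ref{the result 2}).

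Your split puts only the two boundary terms into $B$ and pushes all the $e^{-\int_s^t g}$--integrals into $A$. This makes the contraction estimate for $B$ trivial, but it creates a real gap in the compactness step. To prove equicontinuity of $A\mathcal{X}_\psi$ you must control, for $|t_1-t_2|\le 1$, the tail pieces
\[
\int_{t_1}^{t_2} e^{-\int_s^{t_2}g}\,|f(s)|\,ds
\]
for each integrand $f$ appearing in your $A$. Hypotheses (\ref{L1}) and (\ref{L2}) give exactly this for $f(s)=c(s)p^{\gamma}(s-r_2(s))/p(s)$ and for the exponential factor, which is why they suffice for the paper's one-term $A$. But your $A$ also carries integrands built from $a(s)$, $d(s)$, $b(s-r_1(s))(g(s)p(s)-p'(s))/p^2(s)$, etc., for which no analogous local-Lipschitz bound is assumed; condition (\ref{the result 2}) bounds the weighted integrals from $t_0$ to $t$ by $\alpha$ but does not force $\int_{t_1}^{t_2}|f|$ to be uniformly small. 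So your claim that ``the only integrands not automatically Lipschitz-in-$t$ are $c\,p^{\gamma}/p$ and $g$'' is where the argument breaks: the other integrands are continuous but not assumed bounded, and equicontinuity on the unbounded interval $[t_0,\infty)$ does not follow. The fix is precisely the paper's split---move those Lipschitz terms into $B$ and verify the contraction there (which then uses all of (\ref{the result 2}), and in the paper is carried out in the weighted norm $|\cdot|_h$).
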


\begin{proof}
Let $z(t)=\psi(t)$ on $t \in [m(t_0),t_0]$ and for $t \geq t_0$, let
\begin{eqnarray}\label{defz(t)}
	x(t)=p(t)z(t).
\end{eqnarray} 
Substitute \eqref{defz(t)} in \eqref{2}, we have
	\begin{eqnarray}\label{dz(t)2}
	z'(t)&=&-\frac{p'(t)}{p(t)}z(t)-\frac{a(t)}{p(t)}p(t-r_1(t))z(t-r_1(t))+\frac{1}{p(t)}\frac{d}{dt}Q(t,p(t-r_1(t))z(t-r_1(t)))\nonumber\\
	&&+\frac{d(t)}{p(t)}F(p(t-r_1(t))z(t-r_1(t)),p(t-r_2(t))z(t-r_2(t)))+\frac{c(t)}{p(t)}G(p^{\sigma}(t-r_2(t))z^{\sigma}(t-r_2(t))).
	\end{eqnarray}
Multiply both sides of \eqref{dz(t)2} by $e^{{\int_{t_0}^{t}}g(s)ds}$ and then integrate from $t_0$ to $t$, we have
\begin{eqnarray*}
	z(t)&=&\psi(t_0)e^{-\int_{t_0}^{t}g(u)du}+\int_{t_0-r_1(t_0)}^{t_0}e^{-\int_{s}^{t}g(u)du}\left(g(s)-\frac{p'(s)}{p(s)}\right)z(s)ds\\
	&&-\int_{t_0-r_1(t_0)}e^{-\int_{t_0}^{t}g(u)du}\frac{a(s)}{p(s)}p(s-r_1(s))z(s-r_1(s))ds\\
	&&+\int_{t_0}^{t}e^{-\int_{t_0}^{t}g(u)du}\frac{1}{p(s)}\frac{d}{ds}Q(s,p(s-r_1(s))z(s-r_1(s)))ds\\
	&&+\int_{t_0}^{t}e^{-\int_{t_0}^{t}g(u)du}\frac{c(s)}{p(s)}F(p(s-r_1(s))z(s-r_1(s)),p(s-r_2(s))z(s-r_2(s)))ds\\
	&&+\int_{t_0}^{t}e^{-\int_{t_0}^{t}g(u)du}\frac{b(s)}{p(s)}G(p^{\sigma}(s-r_2(s))z^{\sigma}(s-r_2(s)))ds.
\end{eqnarray*}
And then perform integration by parts , we can conclude for $t\geq t_0$, 
\begin{eqnarray*}
	z(t)&=&\psi(t_0)e^{-\int_{t_0}^{t}g(u)du}+\int_{t_0-r_1(t_0)}^{t_0}e^{-\int_{s}^{t}g(u)du}d(\int_{s-r_1(s)}^{s}\left(g(u)-\frac{p'(u)}{p(u)}\right)z(u)du)\\
	&&+\int_{t_0}^{t}e^{-\int_{t_0}^{t}g(u)du}\left(g(s-r_1(s))-\frac{p'(s-r_1(s))}{p(s-r_1(s))}\right)(1-r_1'(s))z(s-r_1(s))ds\\
	&&-\int_{t_0-r_1(t_0)}e^{-\int_{t_0}^{t}g(u)du}\frac{a(s)}{p(s)}p(s-r_1(s))z(s-r_1(s))ds\\
    &&+\int_{t_0}^{t}e^{-\int_{t_0}^{t}g(u)du}\frac{1}{p(s)}dQ(s,p(s-r_1(s))z(s-r_1(s)))\\
	&&+\int_{t_0}^{t}e^{-\int_{t_0}^{t}g(u)du}\frac{c(s)}{p(s)}F(p(s-r_1(s))z(s-r_1(s)),p(s-r_2(s))z(s-r_2(s)))ds\\
	&&+\int_{t_0}^{t}e^{-\int_{t_0}^{t}g(u)du}\frac{b(s)}{p(s)}G(p^{\sigma}(s-r_2(s))z^{\sigma}(s-r_2(s)))ds.
\end{eqnarray*}
Thus,
	\begin{eqnarray}\label{z(t)2}
		z(t)&=
		&\left[\psi(t_0)-\int_{t_0-r_1(t_0)}^{t_0}\left(g(s)-\frac{p'(s)}{p(s)}\right)z(s)ds-\frac{1}{p(t_0)}Q(t_0,p(t_0-r_1(t_0))z(t_0-r_1(t_0)))\right]e^{-\int_{t_0}^{t}g(s)ds}\nonumber\\
		&&+\int_{t-r_1(t)}^{t}\left(g(s)-\frac{p'(s)}{p(s)}\right)z(s)ds-\int_{t_0}^{t}e^{-\int_{s}^{t}g(u)du}\left(\int_{s-r_1(s)}^{s}\left(g(u)-\frac{p'(u)}{p(u)}\right)z(u)du\right)g(s)ds\nonumber\\
		&&+\int_{t_0}^{t}e^{-\int_{s}^{t}g(u)du}\left\{\left(g(s-r_1(s))-\frac{p'(s-r_1(s))}{p(s-r_1(s))}\right)(1-r_1'(s))-\frac{a(s)}{p(s)}\right\}z(s-r_1(s))ds\nonumber\\
		&&+\frac{1}{p(t)}Q(t,p(t-r_1(t))z(t-r_1(t)))-\int_{t_0}^{t}e^{-\int_{s}^{t}g(u)du}Q(s,p(s-r_1(s))z(s-r_1(s)))\frac{g(s)p(s)-p'(s)}{p^2(s)}ds\nonumber\\
		&&+\int_{t_0}^{t}e^{-\int_{s}^{t}g(u)du}\frac{b(s)}{p(s)}F(p(s-r_1(s))z(s-r_1(s)),p(s-r_2(s))z(s-r_2(s)))ds\nonumber\\
		&&+\int_{t_0}^{t}e^{-\int_{s}^{t}g(u)du}\frac{c(s)}{p(s)}G(p^{\gamma}(s-r_2(s))z^{\gamma}(s-r_2(s))),
	\end{eqnarray}
For $\alpha \in (0,1)$ , choose $\delta > 0$ is sufficiently small, such that:
\begin{eqnarray}\label{to prove A+B2}
	\left(1+\int_{t_0-r_1(t_0)}^{t_0}\left|g(s)-\frac{p'(s)}{p(s)}\right|ds+b(t_0)\right)e^{-\int_{t_0}^{t}g(s)ds}\delta + \alpha \leq 1.
\end{eqnarray}

Let $\psi : [m(t_0),t_0] \to \mathbb{R}$ be a given small bounded initial function with $\Vert \psi \Vert < \delta$. And let $h:[m(t_0), \infty)\to [1, \infty)$ be any strictly increasing and continuous function with $h(m(t_0)) = 1$, $h(s) \to \infty$ as $ t \to \infty$, such that:
\begin{eqnarray}\label{h(t)2}
	&&\left|\frac{p(t-r_1(t))}{p(t)}b(t-r_1(t))\right|+\int_{t-r_1(t)}^{t}\left|g(s)-\frac{p'(s)}{p(s)}\right|h(s)/h(t)du\nonumber\\
	&&\quad+\int_{t_0}^{t}e^{-\int_{s}^{t}g(u)du}\left\{\left|\left(g(s-r_1(s))-\frac{p'(s-r_1(s))}{p(s-r_1(s))}\right)(1-r_1'(s))-\frac{a(s)p(s-r_1(s))}{p(s)}\right|\right\}h(s)/h(t)ds\nonumber\\
	&&\quad+\int_{t_0}^{t}e^{-\int_{s}^{t}g(u)du}\left|g(s)\right|\left(\int_{s-r_1(s)}^{s}\left|g(u)-\frac{p'(u)}{p(u)}h(u)/h(t)\right|du\right)ds \nonumber\\
	&&\quad+\int_{t_0}^{t}e^{-\int_{s}^{t}g(u)du}\left|\frac{g(s)p(s)-p'(s)}{p^2(s)}\right|\left|p(s-r_1(s))b(s-r_1(s))\right|h(s)/h(t)ds\nonumber\\
	&&\quad+\int_{t_0}^{t}e^{-\int_{s}^{t}g(u)du}\left|\frac{b(s)}{p(s)}\right|\left|k_2p(s-r_1(s))+k_3p(s-r_2(s))\right|h(s)/h(t)ds\leq \alpha.
\end{eqnarray}

We note that to apply Krasnoselskii's fixed point theorem, we need to construct two appropriate mappings. Define the mappings: $A,B: \mathcal{X} \to \mathcal{X}$ by $\varphi \in \mathcal{M}$ implies that
\begin{eqnarray}\label{A2}
	(A\varphi)(t)=\int_{t_0}^{t}e^{-\int_{s}^{t}g(u)du}\frac{c(s)}{p(s)}G(p^{\gamma}(s-r_2(s))\varphi^{\gamma}(s-r_2(s)))ds,
\end{eqnarray}
and
\begin{eqnarray}\label{B2}
	(B\varphi)(t)&=&\left[\psi(t_0)-\int_{t_0-r_1(t_0)}^{t_0}\left(g(s)-\frac{p'(s)}{p(s)}\right)\psi(s)ds-\frac{1}{p(t_0)}Q(t_0,p(t_0-r_1(t_0))\psi(t_0-r_1(t_0)))\right]e^{-\int_{t_0}^{t}g(s)ds}\nonumber\\
	&&+\int_{t-r_1(t)}^{t}\left(g(s)-\frac{p'(s)}{p(s)}\right)\varphi(s)ds-\int_{t_0}^{t}e^{-\int_{s}^{t}g(u)du}\left(\int_{s-r_1(s)}^{s}\left(g(u)-\frac{p'(u)}{p(u)}\right)\varphi(u)du\right)g(s)ds\nonumber\\
	&&+\int_{t_0}^{t}e^{-\int_{s}^{t}g(u)du}\left\{\left(g(s-r_1(s))-\frac{p'(s-r_1(s))}{p(s-r_1(s))}\right)(1-r_1'(s))-\frac{a(s)}{p(s)}\right\}\varphi(s-r_1(s))ds\nonumber\\
	&&+\frac{1}{p(t)}Q(t,p(t-r_1(t))\varphi(t-r_1(t)))-\int_{t_0}^{t}e^{-\int_{s}^{t}g(u)du}Q(s,p(s-r_1(s))\varphi(s-r_1(s)))\frac{g(s)p(s)-p'(s)}{p^2(s)}ds\nonumber\\
	&&+\int_{t_0}^{t}e^{-\int_{s}^{t}g(u)du}\frac{b(s)}{p(s)}F(p(s-r_1(s))\varphi(s-r_1(s)),p(s-r_2(s))\varphi(s-r_2(s)))ds.
\end{eqnarray}

We now shows that $\varphi,\eta \in \mathcal{X}_{\psi}$ implies that $A\varphi+B\eta \in \mathcal{X}_{\psi}$. Now, let $\Vert\cdot\Vert$ be the supremum norm on $[m(t_0),\infty)$ of $\varphi \in \mathcal{X}$ if $\varphi$ is bounded. Note that if $\varphi,\eta \in \mathcal{X}_{\psi}$, then 
\begin{eqnarray}\label{A+B<1,2}
	\left|(A\varphi)(t)+(B\eta)(t)\right|
	&\leq&\left(1+\int_{t_0-r_1(t_0)}^{t_0}\left|g(s)-\frac{p'(s)}{p(s)}\right|ds+b(t_0)\right)e^{-\int_{t_0}^{t}g(s)ds}\Vert\psi\Vert\nonumber\\
	&&\quad+\int_{t-r_1(t)}^{t}\left|g(s)-\frac{p'(s)}{p(s)}\right|ds\Vert\varphi\Vert+\int_{t_0}^{t}e^{-\int_{s}^{t}g(u)du}\left(\int_{s-r_1(s)}^{s}\left|g(u)-\frac{p'(u)}{p(u)}\right|du\right)|g(s)|ds\Vert\varphi\Vert\nonumber\\
	&&\quad+\int_{t_0}^{t}e^{-\int_{s}^{t}g(u)du}\left|\left(g(s-r_1(s))-\frac{p'(s-r_1(s))}{p(s-r_1(s))}\right)(1-r_1'(s))-\frac{a(s)}{p(s)}\right|ds\Vert\varphi\Vert\nonumber\\
	&&\quad+\left|\frac{p(t-r_1(t))}{p(t)}b(t-r_1(t))\right|\Vert\varphi\Vert\nonumber\\
	&&\quad+k_1\int_{t_0}^{t}e^{-\int_{s}^{t}g(u)du}|p(s-r_1(s))b(s-r_1(s))|\left|\frac{g(s)p(s)-p'(s)}{p^2(s)}ds\right|\Vert\varphi\Vert\nonumber\\
	&&\quad+\int_{t_0}^{t}e^{-\int_{s}^{t}g(u)du}\left|\frac{b(s)}{p(s)}\right|\left|k_2p(s-r_1(s))+k_3p(s-r_2(s))\right|ds\Vert\varphi\Vert\nonumber\\
	&&\quad+k_4\int_{t_0}^{t}e^{-\int_{s}^{t}g(u)du}\left| \frac{c(s)}{p(s)}\right|\left| p^{\gamma}(s-r_2(s))\right|ds\Vert\varphi\Vert^{\sigma}\nonumber\\
	&\leq&\left(1+\int_{t_0-r_1(t_0)}^{t_0}\left|g(s)-\frac{p'(s)}{p(s)}\right|ds+b(t_0)\right)e^{-\int_{t_0}^{t}g(s)ds}\delta + \alpha \leq 1.
\end{eqnarray}

Thus, we have that $\left|(A\varphi)(t)+(B\eta)(t)\right|\leq 1$ for $t \in [m(t_0),\infty)$. According to the proof of Theorem 2.1, we have that $A$ is continuous and $A\mathcal{X_\psi}$ resides in a compact set.

Now, we will show that $B$ is a contraction with respect to the norm $|\cdot|_h$. For $\forall \varphi_1, \varphi_2 \in \mathcal{X_\psi}$, we have
\begin{eqnarray*}	
	&&\left|(B\varphi_1)(t)-(B\varphi_2)(t)\right|/h(t)\\
	&&\quad\leq \left|\varphi_1-\varphi_2\right|_h\left|\frac{p(t-r_1(t))}{p(t)}b(t-r_1(t))\right|+\int_{t-r_1(t)}^{t}\left|g(s)-\frac{p'(s)}{p(s)}\right|h(s)/h(t)du\\
	&&\quad\quad+\left|\varphi_1-\varphi_2\right|_h\int_{t_0}^{t}e^{-\int_{s}^{t}g(u)du}\left\{\left|\left(g(s-r_1(s))-\frac{p'(s-r_1(s))}{p(s-r_1(s))}\right)(1-r_1'(s))-\frac{a(s)p(s-r_1(s))}{p(s)}\right|\right\}\\
	&&\quad \quad \quad \times h(s-r_1(s))/h(t)ds\\
	&&\quad\quad+\left|\varphi_1-\varphi_2\right|_h\int_{t_0}^{t}e^{-\int_{s}^{t}g(u)du}\left|g(s)\right|\left(\int_{s-r_1(s)}^{s}\left|g(u)-\frac{p'(u)}{p(u)}\right|h(u)/h(t)du\right)ds\\
	&&\quad\quad+\left|\varphi_1-\varphi_2\right|_h\int_{t_0}^{t}e^{-\int_{s}^{t}g(u)du}\left|\frac{g(s)p(s)-p'(s)}{p^2(s)}\right|\left|p(s-r_1(s))b(s-r_1(s))\right|h(s-r_1(s))/h(t)ds\\
	&&\quad\quad+\left|\varphi_1-\varphi_2\right|_h\int_{t_0}^{t}e^{-\int_{s}^{t}g(u)du}\left|\frac{b(s)}{p(s)}\right|\left( \left|k_2p(s-r_1(s))\right|h(s-r_1(s))/h(t)+\left|k_3p(s-r_2(s))\right|h(s-r_2(s))/h(t)\right)ds\\
	&&\quad\leq \alpha\left|\varphi_1-\varphi_2\right|_h.
\end{eqnarray*}

The conditions of Krasnoselskii's theorem are satisfied and there is a fixed point $z(t)$ such that $Az+Bz=z$, which is a solution of \eqref{dz(t)2} with $z(s)=\psi(s)$ on $s \in [m(t_0),t_0]$ and $\left|z(t,t_0,\psi)\right| \leq 1$ for $t \in [m(t_0),\infty)$. Since there exists a bounded function $p: [m(t_0),\infty) \to  (0,\infty)$ with $p(t)=1$ for $t \in [m(t_0),t_0]$, by the hypotheses \eqref{defz(t)} and from the above arguments we deduce that there exists a solution $x$ of \eqref{1} with $x=\psi$ on $[m(t_0),t_0]$ satisfies $x(t,t_0,\psi)$ is bounded for all $t \in [m(t_0),\infty)$. This completes the proof.
\end{proof}
\begin{remark}
The reason why we choose Krasnoselskii's theorem instead of Contraction principle is that we can notice that there exist a term $\varphi^{\gamma}$ in $A\varphi$, which we can't obtain $\left|\varphi_1^{\gamma}-\varphi_2^{\gamma}\right|\leq\left|\varphi_1-\varphi_2\right|$. Thus, we choose  Krasnoselskii's theorem to aviod $\left|\varphi_1^{\gamma}-\varphi_2^{\gamma}\right|$, but we go to verify $A$ is continous and $A\mathcal{M}$ is contained in a compact set.
\end{remark}
\begin{remark}
	We note that  $x(t,t_0,\psi)$ is bounded, but we can't get $x(t,t_0,\psi)\leq 1$. That's because $x(t,t_0,\psi)$ is not only related to $z(t)$, but also to $p(t)$. We just have that $\left|z(t,t_0,\psi)\right| \leq 1$, and the range of $p(t)$ is not determined. Thus, we can control the range of $x(t)$ by controlling the range of $p(t)$.
\end{remark}
Let $p(t)=1$, we obtain the following corollary.
\begin{corollary}\label{cor1}
	Suppose the conditions (i) and (ii) in Theorem \ref{th1} hold, \eqref{the result 2} is replaced by 
	\begin{eqnarray*}
		&&\left|b(t-r_1(t))\right|+\int_{t-r_1(t)}^{t}\left|g(u)\right|du+\int_{t_0}^{t}e^{-\int_{s}^{t}g(u)du}\left\{\left|g(s-r_1(s))(1-r_1'(s))-a(s)\right|\right\}ds\\
		&&\quad+\int_{t_0}^{t}e^{-\int_{s}^{t}g(u)du}\left|g(s)\right|\left(\int_{s-r_1(s)}^{s}\left|g(u)\right|du\right)ds+\int_{t_0}^{t}e^{-\int_{s}^{t}g(u)du}\left|g(s)b(s-r_1(s))\right|ds\\
		&&\quad+\int_{t_0}^{t}e^{-\int_{s}^{t}g(u)du}\left|d(s)\right|(k_2+k_3)ds+k_4\int_{t_0}^{t}e^{-\int_{s}^{t}g(u)du}\left|c(s)\right|ds \leq \alpha.
	\end{eqnarray*} 
	If $\psi$ is a given continuous initial function which is sufficiently small then there is a solution $x(t,t_0,\psi)$ of \eqref{2} on $\mathbb{R}$ which is bounded.
\end{corollary}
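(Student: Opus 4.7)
The plan is to derive this corollary as a direct specialization of Theorem \ref{th1} with the choice $p(t) \equiv 1$. First I would verify that $p \equiv 1$ is an admissible choice for the auxiliary function: it is bounded, strictly positive, differentiable with $p'(t) \equiv 0$, and trivially satisfies $p(t) = 1$ for $t \in [m(t_0), t_0]$. Conditions (\ref{L1}) and (\ref{L2}) of Theorem \ref{th1} need to be incorporated as standing hypotheses; since (\ref{L2}) is unchanged and (\ref{L1}) collapses to $\big|\int_{t_1}^{t_2}|c(u)|\,du\big|\leq L_1|t_1-t_2|$ when $p\equiv 1$, both are mild Lipschitz-type requirements that can be assumed to hold on the functions appearing in the corollary's hypothesis.

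Next I would substitute $p \equiv 1$, $p' \equiv 0$ into the seven terms of inequality (\ref{the result 2}) and check term by term that they reduce to the corresponding terms in the corollary's hypothesis. Specifically: the boundary term $|p(t-r_1(t))b(t-r_1(t))/p(t)|$ becomes $|b(t-r_1(t))|$; the pointwise expression $g(u)-p'(u)/p(u)$ simplifies to $g(u)$; the factor $(g(s)p(s)-p'(s))/p^2(s)$ collapses to $g(s)$, making the fifth integrand $|g(s)b(s-r_1(s))|$; the coefficient $p(s-r_1(s))/p(s)$ disappears from the $a(s)$ term; and the factors $p(s-r_1(s))$, $p(s-r_2(s))$, $p^\gamma(s-r_2(s))$ all become $1$, producing the simplified sixth and seventh terms with $(k_2+k_3)$ and $k_4$ respectively. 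After this substitution the left-hand side of (\ref{the result 2}) is exactly the left-hand side of the displayed inequality in the corollary, so the bound by $\alpha$ is transferred verbatim.

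With all the hypotheses of Theorem \ref{th1} verified under $p \equiv 1$, I would simply invoke the theorem to obtain a bounded solution $x(t,t_0,\psi)$ of (\ref{2}) on $\mathbb{R}$; since $p \equiv 1$, we have $x(t) = z(t)$, so the boundedness of $z$ transfers directly to $x$. There is no real obstacle here beyond careful bookkeeping: the only point that warrants attention is confirming that the hypothesis $\|\psi\| < \delta$ required to close the argument in Theorem \ref{th1} is preserved, which it is, since the smallness condition (\ref{to prove A+B2}) on $\delta$ only depends on $g$, $p$, $b$, and initial data and is unaffected by specializing $p$.
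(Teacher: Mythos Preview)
Your proposal is correct and matches the paper's approach exactly: the paper introduces this corollary with the sentence ``Let $p(t)=1$, we obtain the following corollary'' and gives no further proof, so your term-by-term verification that \eqref{the result 2} collapses to the displayed inequality under $p\equiv 1$, $p'\equiv 0$ is precisely the intended (and only) argument.
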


\begin{remark}
	From Corollary \ref{cor1}, we can easily see that we mainly rely on $|b(t-r_1(t))|< 1$ in the absence of $p(t)$ to obatin the existence of the zero solution of \eqref{2}. However, many actual situations can't  satify this condition. Our results relax the limitations on the neutral term through introducing $p(t)$.
\end{remark}

\begin{corollary}\label{cor2}
Consider the nonlinear neutral differential equation 
\begin{eqnarray}\label{1}
x'(t)=-a(t)x(t-r_1(t))+b(t)x'(t-r_1(t))+c(t)G(x^{\gamma}(t-r_2(t))), \quad t\geq t_0,
\end{eqnarray}
 with initial condition \eqref{initial} and suppose the following conditions are satisfied:

{\rm (i)} $ r_1(t) $ is twice differentiable with $ r_1'(t)\neq 1$ for all $t \in [m(t_0), \infty)$,

{\rm (ii)} there exists a bounded function $ p:[m(t_0), \infty)\to (0, \infty)  $ with $ p(t)=1 $ for $ t\in [m(t_0),t_0] $ such that $ p'(t) $ exists for all $ t \in [m(t_0), \infty)$, and there exists a constant $\alpha \in (0,1)$, and $L_1, L_2 >0$, $k_4$ is denoted in (\ref{Gk}), and an arbitrary continuous function $ g \in C([m(t_0), \infty), \mathbb{R^+})$ such that for $\left|t_1-t_2\right| \leq 1$,$\eqref{L1}$and $\eqref{L2}$hold,
while for $t\geq t_0$, there exist a constant $\alpha \in (0,1)$
\begin{eqnarray}\label{the result}
		&&\left|\frac{p(t-r_1(t))}{p(t)}\frac{b(t)}{1-r_1'(t)}\right|+\int_{t-r_1(t)}^{t}\left|g(u)-\frac{p'(u)}{p(u)}\right|du\nonumber\\
		&&\quad+\int_{t_0}^{t}e^{-\int_{s}^{t}g(u)du}\left\{\left|-\overline\mu(s)+\left(g(s-r_1(s))-\frac{p'(s-r_1(s))}{p(s-r_1(s))}\right)(1-r_1'(s))-\overline\beta(s)\right|\right\}ds\nonumber\\
		&&\quad+\int_{t_0}^{t}e^{-\int_{s}^{t}g(u)du}\left|g(s)\right|\left(\int_{s-r_1(s)}^{s}\left|g(u)-\frac{p'(u)}{p(u)}\right|du\right)ds \nonumber\\
		&&\quad+k_4\int_{t_0}^{t}e^{-\int_{s}^{t}g(u)du}\left|\frac{c(s)}{p(s)}\right|\left|p^{\gamma}(s-r_2(s))\right|ds \leq \alpha,
\end{eqnarray}
where
\begin{eqnarray*}\label{mu}
	\overline{\mu}(s)=\frac{a(s)p(s-r_1(s))-b(s)p'(s-r_1(s))}{p(s)},
\end{eqnarray*}

\begin{eqnarray*}
	c(s)=\frac{p(s-r_1(s))}{p(s)}\frac{b(s)}{(1-r_1'(s))},
\end{eqnarray*}

\begin{eqnarray*}\label{beta}
	\overline{\beta}(s)=g(s)c(s)+c'(s).
\end{eqnarray*}

If $\psi$ is a given continuous initial function which is sufficiently small then there is a solution $x(t,t_0,\psi)$ of \eqref{1} on $\mathbb{R}$ which is bounded.
\end{corollary}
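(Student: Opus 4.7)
My strategy is to reduce Corollary \ref{cor2} to Theorem \ref{th1} by recognizing equation (\ref{1}) as a particular instance of (\ref{2}). The key observation is the identity
\[
b(t)\,x'(t-r_1(t)) = \frac{d}{dt}\!\left[\frac{b(t)}{1-r_1'(t)}\,x(t-r_1(t))\right] - \left(\frac{b(t)}{1-r_1'(t)}\right)' x(t-r_1(t)),
\]
which is legitimate on $[t_0,\infty)$ since hypothesis (i) guarantees $r_1'(t)\neq 1$. Using this identity, equation (\ref{1}) is equivalent to (\ref{2}) with the choices $Q(t,y):=c_0(t)\,y$, where $c_0(t):=b(t)/(1-r_1'(t))$, with $d(t)\equiv 0$, and with the linear coefficient of $x(t-r_1(t))$ taken as $\tilde a(t):=a(t)+c_0'(t)$. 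Because $Q$ is linear, (\ref{Qk}) holds with Lipschitz constant $|c_0(t)|$ and $Q(t,0)=0$; because $d\equiv 0$, the $F$-term and its constants $k_2,k_3$ from (\ref{Fk}) play no role.

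I would then substitute these choices into the bound (\ref{the result 2}) of Theorem \ref{th1}. The $d$-related summand vanishes outright, and the first summand becomes $\left|\frac{p(t-r_1(t))}{p(t)}\,c_0(t)\right|$, matching the first term of (\ref{the result}). The third summand, in which $a$ is replaced by $\tilde a=a+c_0'$, together with the $Q$-related fifth summand (the one containing $\frac{g(s)p(s)-p'(s)}{p^{2}(s)}$), must then be regrouped using $c(s)=\frac{p(s-r_1(s))}{p(s)}c_0(s)$. A direct computation shows that the $c_0'(s)\,p(s-r_1(s))/p(s)$ piece arising from $\tilde a$ combines with the $Q$-related integrand to produce precisely $\overline\mu(s)$ and $\overline\beta(s)$ as defined in the statement, while the remaining second, fourth, and $G$-related summands transcribe verbatim. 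Thus (\ref{the result 2}) collapses to (\ref{the result}).

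The main obstacle is this algebraic regrouping: one must verify carefully that the combination of $c_0'(s)$ coming from $\tilde a$ with the $\frac{g(s)p(s)-p'(s)}{p^2(s)}\,b(s)\,p(s-r_1(s))$ contribution coming from the $Q$-integration collapses without residual into $\overline\beta(s)=g(s)c(s)+c'(s)$, and that the $-b(s)\,p'(s-r_1(s))/p(s)$ piece of $\overline\mu(s)$ emerges naturally when $c_0'(s)\,p(s-r_1(s))/p(s)$ is unpacked via the product/quotient rule. Once this identity is established, Theorem \ref{th1} directly supplies a bounded solution $x(t,t_0,\psi)$ of (\ref{1}) and the corollary follows.
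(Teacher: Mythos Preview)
Your reduction is the intended one: the paper states this result as a corollary of Theorem~\ref{th1} without separate proof, and the passage from \eqref{1} to \eqref{2} via the identity $b(t)x'(t-r_1(t))=\frac{d}{dt}\bigl[c_0(t)x(t-r_1(t))\bigr]-c_0'(t)x(t-r_1(t))$, with $c_0(t)=b(t)/(1-r_1'(t))$, $Q(t,y)=c_0(t)y$, $d\equiv 0$, $\tilde a=a+c_0'$, is exactly the natural specialization. Your algebraic claim is also correct: the computation you sketch does collapse the relevant pieces precisely into $-\overline\mu(s)$ and $-\overline\beta(s)$.

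There is, however, one small gap in the plan as written. You propose to substitute into the \emph{bound} \eqref{the result 2} and then ``regroup'' its third and fifth summands. But in \eqref{the result 2} those two summands carry \emph{separate} absolute values, so after substitution you obtain
\[
\int_{t_0}^{t}e^{-\int_{s}^{t}g}\,|A(s)|\,ds+\int_{t_0}^{t}e^{-\int_{s}^{t}g}\,|B(s)|\,ds,
\]
whereas \eqref{the result} has the single term $\int_{t_0}^{t}e^{-\int_{s}^{t}g}\,|A(s)+B(s)|\,ds$. Since $|A|+|B|\ge|A+B|$ in general, condition \eqref{the result} is the \emph{weaker} hypothesis and cannot be recovered by regrouping after the absolute values have been taken. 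The fix is to perform the regrouping one step earlier, at the level of the integral representation \eqref{z(t)2}: there the corresponding terms appear linearly in $z(s-r_1(s))$ without absolute values, so your identity may be applied before estimating, and the Krasnoselskii argument of Theorem~\ref{th1} then goes through verbatim with \eqref{the result} in place of \eqref{the result 2}. In other words, the corollary follows by re-running the proof of Theorem~\ref{th1} on the specialized equation, not by invoking its statement.
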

Let $p(t)=1$, consider the nonlinear neutral differential equation \eqref{1} with the initial condition \eqref{initial},we have the following corollary.
\begin{corollary}\label{abd's result}
Let \eqref{L1} and \eqref{L2} hold and \eqref{the result} be replaced by
\begin{eqnarray*}
	&&\left|\frac{b(t)}{1-r_1'(t)}\right|+\int_{t-r_1(t)}^{t}g(u)du+\int_{t_0}^{t}e^{-\int_{s}^{t}g(u)du}\left\{\left|-\overline\mu(s)+g(s-r_1(s)(1-r_1'(s))-\overline\beta(s)\right|\right\}ds\\
	&&\quad+\int_{t_0}^{t}e^{-\int_{s}^{t}g(u)du}\left|g(s)\right|\left(\int_{s-r_1(s)}^{s}g(u)du\right)ds+k_4\int_{t_0}^{t}e^{-\int_{s}^{t}g(u)du}\left|c(s)\right|ds \leq \alpha.
\end{eqnarray*}

If $\psi$ is a given continuous initial function which is sufficiently small then there is a solution $x(t,t_0,\psi)$ of \eqref{1} on $\mathbb{R}$ which is bounded.
\end{corollary}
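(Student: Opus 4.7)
The plan is to apply Corollary \ref{cor2} with the special choice $p(t) \equiv 1$, so the proof reduces to a direct verification that condition \eqref{the result} collapses into the simpler inequality stated here. First, I would check that the constant function $p(t) = 1$ satisfies all requirements imposed on $p$ in Corollary \ref{cor2}: it is bounded and positive on $[m(t_0), \infty)$, it equals $1$ on $[m(t_0), t_0]$, and its derivative $p'(t) = 0$ exists throughout $[m(t_0), \infty)$.

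Next, I would perform the term-by-term reduction of each quantity appearing in \eqref{the result}. Since $p \equiv 1$ gives $p'(u)/p(u) = 0$, the ratios $p(s-r_j(s))/p(s) = 1$, and $p^\gamma(s-r_2(s)) = 1$, the following simplifications occur: the leading term $\left|\frac{p(t-r_1(t))}{p(t)}\frac{b(t)}{1-r_1'(t)}\right|$ becomes $\left|\frac{b(t)}{1-r_1'(t)}\right|$; the factor $g(u) - p'(u)/p(u)$ collapses to $g(u)$ (which is nonnegative by hypothesis, so the absolute values may be dropped); the auxiliary $\overline{\mu}(s) = [a(s)p(s-r_1(s)) - b(s)p'(s-r_1(s))]/p(s)$ reduces to $a(s)$; the auxiliary $c(s) = \frac{p(s-r_1(s))}{p(s)}\frac{b(s)}{1-r_1'(s)}$ becomes $\frac{b(s)}{1-r_1'(s)}$, while $\overline{\beta}(s) = g(s)c(s) + c'(s)$ retains its formal expression; and the final integrand $\left|\frac{c(s)}{p(s)}\right|\left|p^\gamma(s-r_2(s))\right|$ becomes $|c(s)|$. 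The Lipschitz-type conditions \eqref{L1} and \eqref{L2} reduce analogously.

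With these substitutions the hypotheses of Corollary \ref{cor2} become exactly those of Corollary \ref{abd's result}, so its conclusion—the existence of a bounded solution $x(t, t_0, \psi)$ on $\mathbb{R}$ for any sufficiently small continuous initial function $\psi$—follows immediately. There is no significant obstacle here; the only care needed is to distinguish the symbol $c(s)$ used as the auxiliary function in Corollary \ref{cor2} from the coefficient $c(s)$ appearing in equation \eqref{1}, which coincide transparently under $p \equiv 1$.
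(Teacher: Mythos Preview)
Your proposal is correct and follows exactly the paper's own approach: the corollary is obtained from Corollary~\ref{cor2} by taking $p(t)\equiv 1$, and your term-by-term reduction is precisely the computation underlying that specialization. Your remark about the overloaded symbol $c(s)$ is apt and reflects an ambiguity already present in the paper.
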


\begin{remark}
Corollary \ref{abd's result} is Theorem 2.1 in \cite{ref7}. Thus, Corollary \ref{cor2} improves Theorem 2.1 in \cite{ref7}.
\end{remark}

Let $G(x^{\gamma}(t-r_2(t))) = x^{\gamma}(t-r_2(t))$ in equation \eqref{1}, we have the following corollary.
\begin{corollary}\label{mima'result}

Let \eqref{L1} and \eqref{L2} hold and \eqref{the result} be replaced by
\begin{eqnarray*}
		&&\left|\frac{p(t-r_1(t))}{p(t)}\frac{b(t)}{(1-r_1'(t))}\right|+\int_{t-r_1(t)}^{t}\left|g(u)-\frac{p'(u)}{p(u)}\right|du\\
		&&\quad+\int_{t_0}^{t}e^{-\int_{s}^{t}g(u)du}\Bigg\{\left|-\overline\mu(s)+(g(s-r_1(s))-\frac{p'(s-r_1(s))}{p(s-r_1(s))})(1-r_1'(s))-\overline\beta(s)\right|\Bigg\}ds\\			&&\quad+\int_{t_0}^{t}e^{-\int_{s}^{t}g(u)du}\left|g(s)\right|\left(\int_{s-r_1(s)}^{s}\left|g(u)-\frac{p'(u)}{p(u)}\right|du\right)ds \\
		&&\quad+\int_{t_0}^{t}e^{-\int_{s}^{t}g(u)du}\left|\frac{c(s)}{p(s)}\right|\left|p^{\gamma}(s-r_2(s))\right|ds\leq \alpha.
\end{eqnarray*}

If $\psi$ is a given continuous initial function which is sufficiently small then there is a solution $x(t,t_0,\psi)$ of \eqref{1} on $\mathbb{R}$ which is bounded.
\end{corollary}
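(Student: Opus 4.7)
The plan is to derive Corollary \ref{mima'result} as a direct specialization of Corollary \ref{cor2}, observing that setting $G(u) = u$ reduces the Lipschitz constant $k_4$ in \eqref{Gk} to exactly $1$. Indeed, when $G$ is the identity we have $|G(x) - G(y)| = |x - y|$, so \eqref{Gk} holds with $k_4 = 1$, and $G(0) = 0$ is automatic. Thus every hypothesis required by Corollary \ref{cor2} is satisfied by the equation \eqref{1} in which $G$ has been replaced by the identity.

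First I would verify that the equation obtained by putting $G(x^{\gamma}(t-r_2(t))) = x^{\gamma}(t-r_2(t))$ in \eqref{1} is a genuine instance of \eqref{1}, with the very same coefficient functions $a(t), b(t), c(t)$ and the same delays $r_1, r_2$. Since conditions (i) and (ii) of Corollary \ref{cor2} involve only $r_1, p, g, L_1, L_2$ and the Lipschitz data of $G$, they carry over verbatim; the only role that $G$ plays in \eqref{the result} is through the multiplicative factor $k_4$ in the last integral.

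Next I would substitute $k_4 = 1$ into inequality \eqref{the result} of Corollary \ref{cor2}. The first four integrals, and the boundary term $\left|\frac{p(t-r_1(t))}{p(t)}\frac{b(t)}{1-r_1'(t)}\right|$, are unaffected, while the final term collapses to
\begin{equation*}
\int_{t_0}^{t} e^{-\int_{s}^{t} g(u)\,du} \left|\frac{c(s)}{p(s)}\right| \left|p^{\gamma}(s-r_2(s))\right| ds,
\end{equation*}
which is precisely the bound displayed in the statement of Corollary \ref{mima'result}. The auxiliary functions $\overline{\mu}(s)$, $c(s)$, $\overline{\beta}(s)$ defined in Corollary \ref{cor2} appear unchanged here, so the replacement hypothesis is exactly \eqref{the result} at $k_4 = 1$.

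There is no real obstacle in this argument; the only point that warrants a line of care is confirming that the identity map is admissible as the nonlinearity $G$ — specifically that \eqref{Gk} is met with Lipschitz constant $1$ on the set $\mathcal{X}_{\psi}$. Once this is noted, Corollary \ref{cor2} applies and yields a bounded solution $x(t, t_0, \psi)$ of \eqref{1} on $\mathbb{R}$ whenever $\psi$ is sufficiently small, completing the proof.
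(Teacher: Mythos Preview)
Your proposal is correct and follows exactly the paper's own route: the paper introduces Corollary~\ref{mima'result} with the sentence ``Let $G(x^{\gamma}(t-r_2(t))) = x^{\gamma}(t-r_2(t))$ in equation \eqref{1}, we have the following corollary,'' i.e.\ it is obtained from Corollary~\ref{cor2} by taking $G$ to be the identity, which forces $k_4=1$ in \eqref{Gk} and collapses \eqref{the result} to the displayed inequality. Your write-up simply makes this specialization explicit.
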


\begin{remark}
Corollary \ref{mima'result} is Theorem 2.1 in \cite{ref8}. Therefore, our result is a generalization of the result in \cite{ref8}.
\end{remark}

\section{Uniform stability and Asymptotic stability}
In this section, we present some new sufficient conditions for uniform stability and asymptotic stability of the zero solution by using Krasnoselskii's fixed point theorem.

\begin{theorem}
Let the conditions \eqref{L1},\eqref{L2}, \eqref{the result 2} hold and assume that 
\begin{eqnarray}\label{A to 0}
\int_{t_0}^{t}e^{-\int_{s}^{t}g(u)du}\left|\frac{c(s)}{p(s)}p^{\gamma}(s-r_2(s))\right|ds \to 0 \text{ as } t \to \infty,
\end{eqnarray}

\begin{eqnarray}\label{g(s) to M}
  \int_{t_0}^{t} g(s)ds \to \infty \text{ as } t \to \infty,
\end{eqnarray}

\begin{eqnarray}\label{K}
   \sup_{0\leq t_1\leq t_2}|\Phi|\leq K<\infty,
\end{eqnarray}
where
\begin{eqnarray*}
	\Phi(t_2,t_1)=e^{-\int_{t_1}^{t_2}g(u)du}.
\end{eqnarray*}
Then the zero solution of \eqref{2} is uniformly stable and asymptotically stable.
\end{theorem}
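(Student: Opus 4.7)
The plan is to split the proof into uniform stability and then asymptotic stability, in both parts working with $z(t)=x(t)/p(t)$ and the fixed-point identity \eqref{z(t)2} constructed in the proof of Theorem \ref{th1}. For uniform stability, given $\epsilon\in(0,1]$ I would repeat the Krasnoselskii construction inside the smaller closed convex set $\mathcal{X}_\psi^{\epsilon}=\{\varphi\in\mathcal{X}:|\varphi(t)|\leq\epsilon$ on $[m(t_0),\infty),\ \varphi=\psi$ on $[m(t_0),t_0]\}$, choosing $\delta=\delta(\epsilon)$ so small that the analogue of \eqref{to prove A+B2} with $\epsilon$ replacing $1$ holds. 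The verifications that $A$ is continuous with $A\mathcal{X}_\psi^{\epsilon}$ relatively compact and that $B$ is an $|\cdot|_h$-contraction of modulus $\alpha$ carry over verbatim from Theorem \ref{th1}, so Krasnoselskii's theorem yields a fixed point $z$ with $|z(t)|\leq\epsilon$ on $[m(t_0),\infty)$; boundedness of $p$ then gives $|x(t)|=|p(t)z(t)|\leq(\sup p)\epsilon$. Since the $\delta$ obtained depends only on $\epsilon$ and on the constants in \eqref{the result 2}, not on $t_0$, uniform stability follows after a trivial rescaling of $\epsilon$.

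For asymptotic stability, let $z$ be the fixed point above with $\|\psi\|$ small, and set $L=\limsup_{t\to\infty}|z(t)|$, which is finite by uniform stability. The aim is to show $L=0$; combined with boundedness of $p$ this will give $x(t)\to 0$. I would bound each of the seven terms on the right of \eqref{z(t)2} and pass to the limsup as $t\to\infty$. The initial-data bracket carries the factor $e^{-\int_{t_0}^{t}g(s)\,ds}$, which vanishes by \eqref{g(s) to M}. The $A$-contribution is bounded by $k_4\bigl(\int_{t_0}^{t}e^{-\int_{s}^{t}g(u)\,du}|c(s)/p(s)|\,|p^{\gamma}(s-r_2(s))|\,ds\bigr)\sup_{s\leq t}|z(s)|^{\gamma}$, whose integral vanishes by \eqref{A to 0}, so $Az(t)\to 0$. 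The pointwise and finite-delay pieces $Q(t,\cdot)/p(t)$ and $\int_{t-r_1(t)}^{t}(g-p'/p)z\,ds$ are controlled by $\sup_{s\geq t-r_1(t)}|z(s)|\to L$ times the matching coefficients in \eqref{the result 2}. Each remaining convolution $\int_{t_0}^{t}e^{-\int_{s}^{t}g(u)\,du}(\text{integrand in }z)\,ds$ I would split at a large fixed $T$: the piece on $[t_0,T]$ is a bounded quantity multiplied by $e^{-\int_{T}^{t}g(u)\,du}\to 0$ by \eqref{g(s) to M}, while on $[T,t]$ the $z$-factor is at most $L+\eta$ (taking $T$ large enough by uniform stability) and the remaining integrand is dominated by the corresponding coefficient in \eqref{the result 2}; condition \eqref{K} supplies the uniform bound on $\Phi$ used when interchanging suprema with integrals. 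Summing all bounds and letting $\eta\to 0$ yields $L\leq\alpha_B L$, where $\alpha_B$ is the sum in \eqref{the result 2} excluding the $A$-coefficient; since the $A$-coefficient drops out entirely by \eqref{A to 0}, strict inequality $\alpha_B<1$ is retained and $L=0$.

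The main obstacle is the sublinear term $z^{\gamma}$ with $\gamma\in(0,1)$: it is not Lipschitz at $0$ and cannot be made contractive by shrinking $\epsilon$. In uniform stability this prevents the $A$-coefficient from being scaled with $\epsilon$ and forces the budget of \eqref{the result 2} to be exploited as a whole, with $\delta$ chosen $\epsilon$-dependently to beat the residual. In asymptotic stability, were the $A$-term still present in the limsup comparison it would contribute $L^{\gamma}$ rather than $L$; since $L^{\gamma}>L$ for $L\in(0,1)$, this would obstruct the desired collapse $L\leq\alpha_B L\Rightarrow L=0$. Condition \eqref{A to 0} is precisely what removes this obstruction by eliminating the $A$-contribution in the limit. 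The remaining work lies in the careful split of each convolution at $T$, which is routine bookkeeping supported by \eqref{K} for uniform control of $\Phi$ and by \eqref{g(s) to M} for annihilation of early-time contributions.
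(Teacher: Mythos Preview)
Your treatment of the convergence $z(t)\to 0$ is a legitimate alternative but differs from the paper. The paper does not run a post-hoc $\limsup$ argument at all: it enlarges the fixed-point space to $\mathcal{X}_\psi=\{\varphi\in\mathcal{X}:|\varphi(t)|\leq 1,\ \varphi=\psi\text{ on }[m(t_0),t_0],\ \varphi(t)\to 0\text{ as }t\to\infty\}$, checks directly that $(A\varphi)(t)\to 0$ (from \eqref{A to 0}) and $(B\varphi)(t)\to 0$ (from \eqref{g(s) to M}) for each $\varphi\in\mathcal{X}_\psi$, and then invokes Krasnoselskii on this smaller space so that the fixed point automatically decays. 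This sidesteps the split-at-$T$ bookkeeping you describe; your route, on the other hand, has the merit of arguing on an arbitrary solution rather than on the construction, which is useful when uniqueness is not taken for granted.

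For uniform stability there is a genuine gap in your plan. Re-running Krasnoselskii on the ball $\mathcal{X}_\psi^{\epsilon}$ requires $A\varphi+B\eta\in\mathcal{X}_\psi^{\epsilon}$ for all $\varphi,\eta$ with $\|\varphi\|,\|\eta\|\leq\epsilon$, but the $A$-piece contributes a term of order $\alpha_A\epsilon^{\gamma}$, where $\alpha_A$ is the last coefficient in \eqref{the result 2}. Since $\gamma\in(0,1)$, once $\epsilon<(\alpha_A/(1-\alpha_B))^{1/(1-\gamma)}$ one already has $\alpha_A\epsilon^{\gamma}>(1-\alpha_B)\epsilon$, so the needed inequality $C\delta+\alpha_B\epsilon+\alpha_A\epsilon^{\gamma}\leq\epsilon$ fails for \emph{every} $\delta>0$; condition (i) of Krasnoselskii cannot hold on $\mathcal{X}_\psi^{\epsilon}$. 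Your obstacle paragraph identifies this but ``choosing $\delta$ $\epsilon$-dependently'' cannot beat a residual that itself exceeds $\epsilon$. The paper argues stability differently, by a first-exit-time contradiction on the solution: it assumes $t^{*}=\inf\{t>t_0:|z(t)|=\epsilon\}$ is finite, bounds $|z(t^{*})|$ directly from \eqref{z(t)2}, and uses \eqref{K} to make the initial-data coefficient $\leq K$ uniformly in $t_0$, obtaining $|z(t^{*})|\leq 2\delta K+\alpha\epsilon<\epsilon$. (The paper's displayed estimate writes the $G$-contribution as part of $\alpha\epsilon$ rather than $\alpha_A\epsilon^{\gamma}$, so the same sublinear scaling is in fact glossed over there as well; but the argument it intends is the $t^{*}$-contradiction, not a second application of Krasnoselskii.)
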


\begin{proof}
	All of the calculations in the proof of Theorem \ref{th1} hold with $h(t)=1$ when $\left|\cdot\right|_h$ is replaced by the supremum norm $\Vert \cdot \Vert $. Besides, Let
	\begin{eqnarray*}
		\mathcal{X_\psi}=\left\{\varphi \in \mathcal{X}: \left|\varphi(t)\right| \leq 1 \text{ for } t \in [m(t_0),\infty) \text{ and } \varphi(t) = \psi(t) \text{ if } t \in [m(t_0),t_0] \text{ and } \varphi \to 0 \text{ as } t \to \infty\right\}.
	\end{eqnarray*}
	
	For $\varphi \in X_\psi$, we can have $A\varphi \to 0$ as $t \to \infty$ by (\ref{A to 0}), and $B\varphi \to 0$ as $t \to \infty$ by (\ref{g(s) to M}).

	Since $A\mathcal{X_\psi}$ has been shown to be equicontinuous, $A$ maps $\mathcal{X_\psi}$ into a compact subset of $\mathcal{X_\psi}$ and $B$ is a contraction, by Krasnoselskii's theorem, there is a $z \in \mathcal{X_\psi}$ with $Az+Bz=z$. As $z \in \mathcal{X_\psi}$, $z(t,t_0,\psi) \to 0$ as $t \to \infty$. Since $p(t)$ is a positive bounded function, by hypotheses \eqref{defz(t)} and from the above arguments we deduce that there exists a solution $x \in X_{\psi} $ of \eqref{1} with $x(t,t_0,\psi) \to 0$ as $t \to \infty$.

	To obtain uniform stability and asymptotic stability, we need to show that the zero solution of \eqref{dz(t)2} is stable. Let $\epsilon >0$ be given and choose $\delta >0 (\delta < \epsilon)$ satisfying $2\delta K+\alpha \epsilon \leq \epsilon$, in other words $\delta < \min{\epsilon,(1-\alpha)\epsilon/2K}$. Notice that K is defined in (\ref{K}) is independent of $t_0$, thus so is $\delta$. This will give us the uniformly stability.

	For $\Vert\psi\Vert\leq \delta$, we claim that $|z(t)|\leq\epsilon$ for all $t\geq t_0$. If $z(t)=z(t,t_0,\psi)$ is the unique solution corresponding to the initial condition $\psi$, with $\Vert\psi\Vert \leq \delta$. Suppose that there exists a $t'>t_0$ such that $|z(t')|=\epsilon$. Let
	\begin{eqnarray*}
		t^*=\inf\left\{t':|z(t')|=\epsilon\right\}.
	\end{eqnarray*}
	Then we have
	\begin{eqnarray*}
		\left|z(t^*)\right|
		&\leq& \Vert\psi\Vert
		\left(1+\int_{t_0-r_1(t_0)}^{t_0}\left|g(s)-\frac{p'(s)}{p(s)}\right|ds+b(t_0)\right)e^{-\int_{t_0}^{t^*}g(s)ds}\\  
		&&+\epsilon\Bigg\{\left|\frac{p(t^*-r_1(t^*))}{p(t^*)}b(t^*-r_1(t^*))\right|+\int_{t^*-r_1(t^*)}^{t^*}\left|g(u)-\frac{p'(u)}{p(u)}\right|du\\
		&&+\int_{t_0}^{t^*}e^{-\int_{s}^{t^*}g(u)du}\left\{\left|\left(g(s-r_1(s))-\frac{p'(s-r_1(s))}{p(s-r_1(s))}\right)(1-r_1'(s))-\frac{a(s)p(s-r_1(s))}{p(s)}\right|\right\}ds\\
		&&+\int_{t_0}^{t^*}e^{-\int_{s}^{t^*}g(u)du}\left|g(s)\right|\left(\int_{s-r_1(s)}^{s}\left|g(u)-\frac{p'(u)}{p(u)}\right|du\right)ds\\
		&&+\int_{t_0}^{t^*}e^{-\int_{s}^{t^*}g(u)du}\left|\frac{g(s)p(s)-p'(s)}{p^2(s)}\right|\left|p(s-r_1(s))b(s-r_1(s))\right|ds\\
		&&+\int_{t_0}^{t^*}e^{-\int_{s}^{t^*}g(u)du}\left|\frac{c(s)}{p(s)}\right|\left|k_2p(s-r_1(s))+k_3p(s-r_2(s))\right|ds+k_4\int_{t_0}^{t^*}e^{-\int_{s}^{t^*}g(u)du}\left|\frac{c(s)}{p(s)}\right|\left|p^{\gamma}(s-r_2(s))\right|ds\Bigg\}\\
		&\leq& 2\delta K+\alpha \epsilon < \epsilon,
	\end{eqnarray*}
	which contradicts the defination of $t^*$. Thus, $\left|z(t)\right| < \epsilon$ for all $t \geq t_0$, and the zero solution of \eqref{dz(t)2} is stable. Hence the solution of \eqref{dz(t)2} is uniformly stable, and since $z(t)$ converges to zero as $t \to \infty$, we get the asymptotic stability. Since $p(t)$ is a positive bounded function, from the above arguments we obtain that the zero solution of \eqref{1} is uniformly stable and asymptotic stable. The proof is complete.
\end{proof}

\begin{corollary}\label{thm2}
 Let \eqref{L1},\eqref{L2},\eqref{the result} and \eqref{A to 0}-\eqref{K}hold,
Then  the zero solution of \eqref{1} is uniformly stable and asymptotically stable.
\end{corollary}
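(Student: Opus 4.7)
The plan is to derive this corollary by specializing the main stability theorem of this section (the one whose proof ends just above Corollary \ref{thm2}) to equation \eqref{1}, viewed as a particular instance of the general equation \eqref{2}. The identification is the same one used in Corollary \ref{cor2}: I take $d(t)F(\cdot,\cdot)\equiv 0$, and I rewrite the neutral term $b(t)x'(t-r_1(t))$ as $\frac{d}{dt}Q(t,x(t-r_1(t)))$ by choosing
\[
Q(t,y)=\frac{b(t)}{1-r_1'(t)}\,y,
\]
so that $Q(t,0)=0$ and the Lipschitz constant $b(t)$ in \eqref{Qk} becomes (with a harmless abuse of notation) $|b(t)/(1-r_1'(t))|$. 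Expanding the derivative picks up an extra term $c'(t)x(t-r_1(t))$ with $c(t)=\frac{p(t-r_1(t))}{p(t)}\frac{b(t)}{1-r_1'(t)}$ after the transformation $x=p(t)z$; this is precisely the modification that produces the functions $\overline\mu(s)$ and $\overline\beta(s)$ defined in Corollary \ref{cor2}.

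Next I would verify, term by term, that condition \eqref{the result 2} reduces to condition \eqref{the result} under this identification. The neutral term $\left|\frac{p(t-r_1(t))}{p(t)}b(t-r_1(t))\right|$ in \eqref{the result 2} becomes $\left|\frac{p(t-r_1(t))}{p(t)}\frac{b(t)}{1-r_1'(t)}\right|$; the $\left\{\cdots\right\}$ bracket inside the third integrand of \eqref{the result 2} reorganizes into $\left|-\overline\mu(s)+\bigl(g(s-r_1(s))-\frac{p'(s-r_1(s))}{p(s-r_1(s))}\bigr)(1-r_1'(s))-\overline\beta(s)\right|$ after grouping the $a(s)p(s-r_1(s))/p(s)$ piece with $-b(s)p'(s-r_1(s))/p(s)$ (from differentiating $Q$) into $\overline\mu(s)$, and collecting the remaining pieces coming from the $Q$-term into $\overline\beta(s)$; the $F$-integrals vanish; and the $G$-integral is unchanged. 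In particular, the hypotheses \eqref{L1}, \eqref{L2}, \eqref{the result}, \eqref{A to 0}, \eqref{g(s) to M}, \eqref{K} of the corollary imply all the hypotheses of the main theorem for this reduced equation.

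Having established this equivalence, the conclusion is immediate from the main theorem: the zero solution of the transformed equation \eqref{dz(t)2} (with the above $Q$ and $F=0$) is uniformly stable and asymptotically stable. Since $p(t)$ is positive and bounded with $p(t)=1$ on $[m(t_0),t_0]$, the transformation $x(t)=p(t)z(t)$ transfers both properties back to $x(t,t_0,\psi)$: the $\delta$ for uniform stability can be chosen independently of $t_0$ (as $p$ is bounded and $K$ is $t_0$-independent), and $z(t)\to 0$ as $t\to\infty$ together with boundedness of $p$ yields $x(t)\to 0$.

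The main obstacle I expect is purely bookkeeping: making sure the algebraic rearrangement of the third integrand in \eqref{the result 2} really does match the $-\overline\mu(s)+(\cdots)(1-r_1'(s))-\overline\beta(s)$ grouping in \eqref{the result}, because the $Q$-terms coming from the fifth integrand of \eqref{the result 2} (the one involving $\frac{g(s)p(s)-p'(s)}{p^2(s)}b(s-r_1(s))p(s-r_1(s))$) must be reabsorbed into $\overline\beta(s)$ via the product rule for $c(s)$. Once that identification is written out cleanly, no further estimates are needed, since the rest of the argument is a direct quotation of the main theorem's proof.
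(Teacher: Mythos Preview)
Your overall strategy --- specialize the main stability theorem to equation~\eqref{1} by choosing $Q(t,y)=\dfrac{b(t)}{1-r_1'(t)}\,y$ and $F\equiv 0$ --- is the right idea, but the step ``verify term by term that \eqref{the result 2} reduces to \eqref{the result}'' does not go through as a black-box implication between the two inequalities. In \eqref{the result 2} the $a$-contribution and the $Q$-contribution sit in \emph{separate} integrals, each carrying its own absolute value, whereas in \eqref{the result} these pieces are \emph{combined} inside a single absolute value through $\overline\mu(s)$ and $\overline\beta(s)$. Since $|A+B|\le |A|+|B|$, the assumption \eqref{the result} is strictly weaker than what you would obtain by literally substituting your $Q$ into \eqref{the result 2}; consequently \eqref{the result} does \emph{not} imply the hypotheses of Theorem~3.1, and you cannot simply invoke that theorem's statement.

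What the paper (implicitly) does --- and what you should do --- is go back one level: use the integral representation for equation~\eqref{1} itself. When you substitute $x=p(t)z$ into \eqref{1} and integrate by parts on $\dfrac{b(s)}{1-r_1'(s)}\dfrac{d}{ds}\bigl[p(s-r_1(s))z(s-r_1(s))\bigr]$, the $a$-piece, the $b\,p'$-piece, and the derivative $c'(s)$ all land in the \emph{same} integrand multiplying $z(s-r_1(s))$, and only then do you take absolute values. That is precisely the origin of the combined quantity $-\overline\mu(s)+(\cdots)(1-r_1'(s))-\overline\beta(s)$ in \eqref{the result}. Once you have this representation, the stability argument is indeed a verbatim repetition of the proof of Theorem~3.1 (choice of $\delta$, contradiction at $t^*$, and the passage from $z$ back to $x$ via boundedness of $p$). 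So your final paragraph is correct in spirit; the fix is to quote the \emph{proof} of Theorem~3.1 applied to the representation behind Corollary~\ref{cor2}, not to quote the theorem's \emph{statement} after a substitution into \eqref{the result 2}.
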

\begin{remark}
The stability of solution of \eqref{1} and \eqref{2} are related to the constant $K=\sup_{0\leq t_1\leq t_2}\left| e^{-\int_{t_1}^{t_2}g(u)du}\right|$, which depends on the initial moment $t_0$. We notice that the key to obtain the uniform stability is making $K$ independent of $t_0$, as we do in our results.
\end{remark}

\section{An example}
We consider a class of nonlinear neutral delay differential
equation as follows:
\begin{eqnarray}\label{ex1}
	x'(t)=-a(t)x(t-r_1(t))+b(t)x'(t-r_1(t))+c(t)G(x^{\gamma}(t-r_2(t))), \quad t\geq0,
\end{eqnarray}
where $b(t)=\frac{1}{7}\sin(t)$, $r_1(t)=0.2t$, $r_2(t)=0.2t$, $c(t)=\frac{1}{100}\frac{(0.8t+0.2)^{\frac{1}{3}}}{(t+0.1)(t+0.2)}$, $G(x)=\sin(x)$ and $a(t)$ satisfies:
\begin{eqnarray*}
	\left|-\overline\mu(t)+\left(g(t-r_1(t))-\frac{p'(t-r_1(t))}{p(t-r_1(t))}\right)(1-r_1'(t))-\overline\beta(t)\right|\leq \frac{0.015}{t+0.1},
\end{eqnarray*}
where $\overline\mu(t)$ and $\overline\beta(t)$ are defined in Corollary 2.6. 
\begin{proof}
Choose $p(t)=\frac{1}{t+0.2}$, and $g(t)=\frac{0.1}{t+0.1}$, then we have
\begin{eqnarray*}
	&&\left|\frac{p(t-r_1(t))}{p(t)}\frac{b(t)}{1-r_1'(t)}\right|=\left|\frac{t+0.2}{0.8t+0.2}\times\frac{1}{7}\sin(t)\times\frac{1}{0.8}\right|\leq 0.2231,\\
	&&\int_{t-r_1(t)}^{t}\left|g(s-\frac{p'(s)}{p(s)}\right|ds=\int_{0.8t}^{t}\left|\frac{0.1}{s+0.1}+\frac{1}{s+0.2}\right|ds\leq 0.2449,\\
	&&\left|\int_{t_0}^{t}e^{-\int_{s}^{t}g(u)du}\Bigg\{\left|-\overline\mu(s)+(g(s-r_1(s))-\frac{p'(s-r_1(s))}{p(s-r_1(s))})(1-r_1'(s))-\overline\beta(s)\right|\Bigg\}ds\right|\leq\left|\int_{t_0}^{t}e^{-\int_{s}^{t}g(u)du}\frac{0.015}{s+0.1}ds\right|\leq 0.15,\\
	&&\left|\int_{t_0}^{t}e^{-\int_{s}^{t}g(u)du}\left|g(s)\right|\left(\int_{s-r_1(s)}^{s}\left|g(u)-\frac{p'(u)}{p(u)}\right|du\right)ds\right|\leq 0.2449,\\
	&&k_4\left|\int_{t_0}^{t}e^{-\int_{s}^{t}g(u)du}\left|\frac{c(s)}{p(s)}\right|\left|p^{\gamma}(s-r_2(s))\right|ds\right|\leq0.1\left|\int_{t_0}^{t}e^{-\int_{s}^{t}g(u)du}\frac{0.1}{s+0.1}ds\right|\leq 0.1.
\end{eqnarray*}
Let $\alpha=0.2231+0.2449+0.15+0.2449+0.1$, it's easy to see that all the conditions of Corollary 2.6 hold for $\alpha\leq 0.973<1$.
\end{proof} 
Then Corollary 2.6 implies the zero solution of \eqref{ex1} is bounded. Moreover, we can obtain that the zero solution of \eqref{ex1} is uniformly stable and asymptotically stable by Corollary 3.2.

{}

\end{document}